\documentclass[11pt]{amsart}
\usepackage{newlfont,amsmath,enumerate,amssymb}

\setlength{\oddsidemargin}{0in}
\setlength{\evensidemargin}{0in}
\setlength{\textwidth}{6.35in}
\setlength{\topmargin}{0in}
\setlength{\textheight}{8.3in}

\newcommand{\SL}{{\mathrm{SL}}}

\newtheorem{lemma}{Lemma}

\newtheorem{prop}{Proposition}
\newtheorem{thm}{Theorem}
\newtheorem{cor}{Corollary}

\begin{document}


\title[]{A tale of two Hecke algebras} 

\author{Gordan Savin}

\address{Department of
Mathematics, University of Utah, Salt Lake City, UT 84112}
\thanks{Partially supported by NSF grant DMS 0852429.}
\email{savin@math.utah.edu}
\begin{abstract} 
We use Bernstein's presentation of the Iwahori-Matsumoto Hecke algebra to obtain a simple proof 
of the Satake isomorphism and, in the same stroke, compute the center of the Iwahori-Matsumoto Hecke algebra. 
\end{abstract} 

\maketitle

\section{Introduction} 

 Let $G$ be a connected, split, reductive group over a non-archimedean local field  $F$.  
 Fix a maximal split torus  $T$ in $G$.  Then $T$ determines a root system $\Phi$. 
 Let $W$ be the corresponding Weyl group. 
 Let $K$ be a hyper-special maximal compact subgroup of $G$. More precisely, 
 the torus $T$ preserves a unique apartment in the Bruhat-Tits building of $G$, and we pick $K$ 
 to be the stabilizer of a hyper special vertex in the apartment.  Then $T_K=T\cap K$ is a maximal 
 compact subgroup of $T$, and the quotient $X=T/T_K$ is isomorphic to the co-character lattice of $T$. 
 Let $H_K=C_c(K\backslash G/K)$ be the Hecke algebra of $K$-bi-invariant, compactly supported 
 functions on $G$.  Let $B=TN$ be a Borel subgroup containg $T$. 
 Let $f\in C_c(G/K)$. Define $S(f)$, a function on $T/T_K$, by 
 \[ 
 S(f)(t) =\delta^{1/2}(t) \int_N f(tn) ~dn
 \]
 where $\delta$ is the modular character. 
  A famous theorem of Satake \cite{Sa} states that the map $S$ is an isomorphism of $H_K$ and $\mathbb C[X]^W$.

Let  $I\subset K$ be the Iwahori subgroup such that $I\cap  B= K\cap B$.  
Let $H_I=C_c(I\backslash G/I)$ be the Hecke algebra of $I$-bi-invariant, compactly supported 
functions on $G$.  Let $Z_I$ be the center of $H_I$. 
The space $C_c(I \backslash G/K)$ is naturally 
a left $H_I$-module and a right $H_K$-module. Using Bernstein's description of $H_I$ we show, in Theorem \ref{A},  that 
the map $S$ gives an \underline{explicit} isomorphism
\[ 
S : C_c(I \backslash G/K)\rightarrow \mathbb C[X]. 
\] 
Then, as a simple consequence, we prove that  the algebras 
$Z_I$, $H_K $ and $\mathbb C[X]^W $ are isomorphic. 

\section{Some preliminaries} 

The measure on $G$ is normalized so that the volume of $I$ is one. The space $C_c(G)$ of 
locally-constant, compactly supported functions is an algebra with respect to the convolution 
$\ast$ of functions. The unit of the algebra is $H_K$ is denoted by $1_K$. It is a function 
supported on $K$ such that $1_K(k)=\frac{1}{[K:I]}$ for all $k\in K$. 

For every root $\alpha$ we fix a homomorphism $\varphi_{\alpha} : \SL_2(F)\rightarrow G$. 
The co-root $\alpha^{\vee}$ is an element of $X$ represented in $T$ by 
\[ 
\varphi_{\alpha}
\left(\begin{array}{cc}
v & 0 \\
0 & v^{-1} 
\end{array}\right)
\] 
where $v\in F$ has valuation 1. 
 For every $u\in F$, let 
\[ 
x_{\alpha}(u)=\varphi_{\alpha}
\left(\begin{array}{cc} 
1 & u \\
0 &  1 
\end{array}\right). 
\] 
We view the root $\alpha$ as a homomorphism $\alpha : X \rightarrow \mathbb Z$ such that, if $x\in X$ and $t_x\in T$ is a 
representative of $x$, then 
\[ 
t_x x_{\alpha}(u) t_x^{-1}=x_{\alpha} (v u)
\] 
where the valuation of $v$ is $\alpha(x)$.
 We say that $x$ is {\em dominant} if $\alpha(x)\geq 0$ for all positive roots 
$\alpha$. 

\section{Iwahori Matsumoto Hecke algebra} 
 Let $q$ be the order of the residue field of $F$. 
 We summarize first some results of \cite{IM}.  

The $I$-double co-sets in $G$ are parameterized by $\tilde W = N_G(T_K)/T_K$. This group is a 
semi-direct product of the lattice $X$ and the Weyl group $W$. The length function 
$\ell :  \tilde W \rightarrow \mathbb Z$ is defined by 
\[ 
q^{\ell(w)}=[I w I : I].
\] 
Let $T_{w}$ denote the characteristic function of the double coset $IwI$. 
Then $T_{w} T_{v}=T_{wv}$ if and only if $\ell(w)+\ell(v)=\ell(wv)$, 
and $\ell(w)+\ell(v)=\ell(wv)$ if and only if $IwIvI=IwvI$.   

Let $\rho$ be the sum of all positive roots. 
Then $\ell(x)=\rho(x)$  for a dominant $x\in X$. 
It follows that $T_{x}\cdot T_{y}=T_{x+y}$ 
for any two dominant  $x$ and $y$. 
 Any  $x\in X$ can be written as  $x=y-z$ where $y$ and $z$ are two dominant elements in $X$. Following Bernstein, let 
\[ 
\theta_x=q^{(\ell(z)-\ell(y))/2}\cdot T_y T_z^{-1}. 
\]

\begin{prop}\label{relation} 
 Let $x\in X$, and $s\in W$ a reflection corresponding to a simple root $\alpha$. Then 
\[ 
T_s \theta_x- \theta_{s(x)} T_s= (1-q)\frac{\theta_x-\theta_{s(x)}}{1-\theta_{-\alpha^{\vee}}}.  
\] 
\end{prop}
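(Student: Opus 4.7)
The plan is to reduce the relation to a small check by two successive reductions.

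First, I would establish that $\theta_x \theta_{x'} = \theta_{x+x'}$ for all $x, x' \in X$. Writing $x = y - z$ and $x' = y' - z'$ with $y, z, y', z'$ dominant, this reduces to the commutativity $T_z T_{y'} = T_{y'} T_z$ (both equal $T_{z+y'}$ by the already-stated fact $T_a T_b = T_{a+b}$ for dominant $a, b$) together with a matching of $q$-exponents. Consequently the $\theta_x$ generate a commutative subalgebra isomorphic to $\mathbb{C}[X]$, and the right-hand side of the proposition is a bona fide element of the Hecke algebra.

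Second, I would observe that the Bernstein relation propagates under addition in $x$. Denote the right-hand side by $R(x)$. If the relation holds for $x_1$ and for $x_2$, applying it first to $x_1$ and then to $x_2$ inside $T_s \theta_{x_1} \theta_{x_2}$ gives
\[
T_s \theta_{x_1+x_2} - \theta_{s(x_1+x_2)} T_s = \theta_{s(x_1)} R(x_2) + R(x_1) \theta_{x_2}.
\]
Expanding the two $R$'s and using the commutativity of the $\theta$'s, the right side rewrites as $(1-q)(\theta_{x_1}\theta_{x_2} - \theta_{s(x_1)}\theta_{s(x_2)})/(1 - \theta_{-\alpha^{\vee}}) = R(x_1+x_2)$. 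Hence it suffices to verify the relation on any generating subset of $X$.

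For the generators I would take the sublattice $X_0 := \ker \alpha$ together with one element $x_0$ with $\alpha(x_0) \ne 0$ (the sequence $0 \to X_0 \to X \to \alpha(X) \to 0$ splits). For $x \in X_0$, $s(x) = x$ and both sides of the relation vanish; the claim reduces to $T_s \theta_x = \theta_x T_s$. Writing $x = y - z$ with $y, z \in X_0$ dominant, this in turn reduces to $T_s T_y = T_y T_s$, which follows from the length identity $\ell(s) + \ell(y) = \ell(sy) = \ell(ys)$: both products then collapse to $T_{sy}$. For $x = x_0$, the entire statement lives inside the rank-one affine Hecke subalgebra coming from $\varphi_{\alpha} : \SL_2(F) \to G$, and a direct computation using the quadratic relation $T_s^2 = (q-1) T_s + q$ together with the definition of $\theta_{x_0}$ finishes the proof. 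The main obstacle is the geometric length identity needed in the $X_0$ case, namely that translations by $y \in \ker \alpha$ do not invert the affine root attached to $s$; everything else is algebraic manipulation inside a rank-one subalgebra.
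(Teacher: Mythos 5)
The paper itself does not prove Proposition \ref{relation}: it cites Lusztig \cite{Lu}, who derives it from \cite{IM}, and remarks that it can also be checked by a direct calculation in $\varphi_{\alpha}(\SL_2(F))$, referring to \cite{S2}. So your proposal is not parallel to anything in the paper; its skeleton is rather the standard proof from the literature (compare \cite{HKP}): multiplicativity $\theta_x\theta_{x'}=\theta_{x+x'}$, propagation of the relation under addition in $x$, and reduction to base cases. The parts you spell out are correct. Writing $R(x)$ for the right-hand side, the propagation identity $T_s\theta_{x_1+x_2}-\theta_{s(x_1+x_2)}T_s=\theta_{s(x_1)}R(x_2)+R(x_1)\theta_{x_2}=R(x_1+x_2)$ checks out, and the case $x\in X_0=\ker\alpha$ via the length identity $\ell(sy)=\ell(s)+\ell(y)=\ell(ys)$ is sound, granted one unstated (but true) fact: every element of $X_0$ is a difference of two dominant elements of $X_0$, which holds because $\{\alpha=0\}$ cuts out a facet of the dominant cone, hence a cone of full dimension inside $\ker\alpha$.

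There are, however, two genuine gaps. First, closure under addition only makes the set of $x$ satisfying the relation a sub-monoid of $X$, so verification on $X_0\cup\{x_0\}$ yields the relation only on $X_0+\mathbb{Z}_{\geq 0}\,x_0$, not on $X$; ``it suffices to verify on any generating subset'' is false as stated. You need either a second base case at $-x_0$, or closure under $x\mapsto -x$. The latter is the efficient fix: multiplying $T_s\theta_x=\theta_{s(x)}T_s+R(x)$ by $\theta_{-s(x)}$ on the left and $\theta_{-x}$ on the right, and using that $R(x)$ lies in the commutative algebra $A$, gives
\[
T_s\theta_{-x}-\theta_{s(-x)}T_s=-\theta_{-x-s(x)}R(x)=R(-x).
\]
Second, the base case $x=x_0$ is the entire content of the proposition, and ``a direct computation using the quadratic relation together with the definition of $\theta_{x_0}$'' underestimates it: one also needs length identities in $\tilde W$, e.g.\ for $x_0$ dominant with $\alpha(x_0)>0$ one has $\ell(sx_0)=\ell(x_0)-1$, hence $T_{x_0}=T_sT_{sx_0}$ and $T_sT_{x_0}=(q-1)T_{x_0}+qT_{sx_0}$, after which $T_{sx_0}$ must still be converted into Bernstein's generators; this is precisely the calculation the paper outsources to \cite{S2}. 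Your framing of where this computation lives is also off: when $\alpha(X)=\mathbb{Z}$ the generator must satisfy $\alpha(x_0)=\pm 1$, so $x_0\notin\mathbb{Z}\alpha^{\vee}$, and then $\theta_{x_0}$ does not lie in the Hecke algebra of the subgroup $\varphi_{\alpha}(\SL_2(F))$; the correct rank-one object is the affine Hecke algebra of the rank-one root datum $(X,\{\pm\alpha\})$, generated by $T_s$ together with all of $A$. With the negation step added and the base-case computation actually carried out, your plan does become a complete proof.
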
 
 Lusztig \cite{Lu} derives this proposition from \cite{IM}. 
 It can be also verified by a direct calculation in $\varphi_{\alpha}(\SL_2(F))$, see \cite{S2}.

\begin{cor} \label{central} Let $x\in X$, and $s\in W$ a simple reflection, as in Proposition \ref{relation}. Then 
\[
T_s (\theta_x + \theta_{s(x)})= (\theta_x + \theta_{s(x)})T_s.
\] 
\end{cor}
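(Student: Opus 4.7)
The plan is to derive this by applying Proposition \ref{relation} twice and summing the resulting identities. The key observation is that the right-hand side of the relation is antisymmetric in $x$ and $s(x)$, so it cancels upon symmetrization.

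Concretely, I would first write down the identity of Proposition \ref{relation} as stated, namely
\[
T_s \theta_x - \theta_{s(x)} T_s = (1-q)\frac{\theta_x-\theta_{s(x)}}{1-\theta_{-\alpha^{\vee}}}.
\]
Next, I would apply the same proposition with $x$ replaced by $s(x)$. Since $s$ is an involution, $s(s(x)) = x$, so the second identity reads
\[
T_s \theta_{s(x)} - \theta_{x} T_s = (1-q)\frac{\theta_{s(x)}-\theta_{x}}{1-\theta_{-\alpha^{\vee}}}.
\]
Adding these two equations, the left-hand side telescopes to $T_s(\theta_x + \theta_{s(x)}) - (\theta_x + \theta_{s(x)})T_s$, while the right-hand side vanishes because the numerators $\theta_x - \theta_{s(x)}$ and $\theta_{s(x)} - \theta_x$ are negatives of each other. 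This yields exactly the desired commutation relation.

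There is essentially no obstacle here; the only thing to check is that it is legitimate to apply Proposition \ref{relation} with $s(x)$ in place of $x$, which is immediate since $s(x) \in X$ and the statement holds for every element of $X$. One should also note that the denominator $1-\theta_{-\alpha^\vee}$ is a common factor independent of $x$ versus $s(x)$, so no manipulation of the denominator is required before adding.
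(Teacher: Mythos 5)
Your proof is correct and is precisely the intended derivation: the paper states this corollary without proof, since it follows immediately from Proposition \ref{relation} by the symmetrization you describe. The only point worth noting---that the two right-hand sides cancel as elements of $A \cong \mathbb{C}[X]$ because the numerators are negatives over the same denominator---is exactly the one you address, so nothing is missing.
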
 

\begin{prop} \label{basis}  (Bernstein's basis) Elements $\theta_x T_w$, where $x\in X$ and $w\in W$, form a basis of $H_I$. 
\end{prop}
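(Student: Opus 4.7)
My approach is to show that $\{\theta_x T_w\}_{(x,w)\in X\times W}$ is related to the Iwahori–Matsumoto basis $\{T_{\tilde w}\}_{\tilde w\in\tilde W}$ by an invertible, length-triangular change of basis, via the set-theoretic bijection $X\times W\leftrightarrow \tilde W = X\rtimes W$.

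I would first record two preliminary points. The expression $\theta_x = q^{(\ell(z)-\ell(y))/2}T_yT_z^{-1}$ is independent of the dominant decomposition $x = y-z$: if also $x = y'-z'$, then $y+z' = y'+z$ is dominant, and the identity $T_aT_b = T_{a+b}$ for dominant $a,b$ together with the commutativity of dominant translations in $H_I$ yields $T_yT_{z'} = T_{y'}T_z$, hence $T_yT_z^{-1} = T_{y'}T_{z'}^{-1}$; additivity of $\ell$ on dominant elements makes the prefactors match. The same commutativity gives multiplicativity $\theta_{x_1}\theta_{x_2} = \theta_{x_1+x_2}$, so $x\mapsto\theta_x$ is a group homomorphism $X\to H_I^\times$; in particular $\theta_y = q^{-\ell(y)/2}T_y$ for $y$ dominant, and every $\theta_x$ is invertible.

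The main step is the leading-term formula
\[
\theta_x \;=\; q^{-\ell(x)/2}\,T_x \;+\; \sum_{\substack{\tilde v\in\tilde W\\ \ell(\tilde v)<\ell(x)}} c_{\tilde v}\,T_{\tilde v}.
\]
To obtain it, I would expand $T_z^{-1}$ along a reduced expression $z = s_{i_1}\cdots s_{i_k}$ using $T_s^{-1} = q^{-1}T_s + (q^{-1}-1)$: the top-length piece is $q^{-\ell(z)}T_{z^{-1}}$, and all other pieces have strictly smaller length. Substituting into $\theta_x = q^{(\ell(z)-\ell(y))/2}T_yT_z^{-1}$ and isolating the top-length component of $T_yT_{z^{-1}}$ via the Bruhat/length combinatorics of $\tilde W$ yields the formula. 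Multiplying on the right by $T_w$ and using that $T_xT_w$ expands as a combination of $T_{(x,w')}$'s (with nonzero coefficient at $T_{xw}$) then shows that $\theta_xT_w$ has a controlled triangular structure in the length filtration.

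From here, an induction on $\ell(\tilde v)$ recovers each $T_{\tilde v}$ as a linear combination of $\theta_{x'}T_{w'}$'s (spanning), and the triangularity of the expansion forces linear independence, finishing the proof. The main obstacle is the leading-term formula itself: since for general $x$ no dominant decomposition satisfies $\ell(y)+\ell(z) = \ell(x)$, the product $T_yT_{z^{-1}}$ undergoes substantial Hecke reduction before the coefficient of $T_x$ stabilizes. The crucial input is the well-definedness of $\theta_x$ established in the first step: since every valid decomposition must produce the same element of $H_I$, the leading coefficient at $T_x$ is pinned down to $q^{-\ell(x)/2}$ regardless of how messy the intermediate reductions are.
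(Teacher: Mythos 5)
Your route --- exhibiting $\{\theta_x T_w\}$ as a triangular change of basis from the Iwahori--Matsumoto basis $\{T_{\tilde w}\}$ --- is the classical approach of Lusztig \cite{Lu} and Haines--Kottwitz--Prasad \cite{HKP}, and is genuinely different from the paper's. But as written it has a real gap, precisely at the step you yourself flag as the main obstacle: the leading-term formula $\theta_x = q^{-\ell(x)/2}T_x + (\text{terms of length} < \ell(x))$ is asserted, not proved, and your proposed justification does not work. Expanding $T_z^{-1}$ does give a top piece $q^{-\ell(z)}T_{z^{-1}}$ plus shorter terms, but the product $T_yT_{z^{-1}}$ then produces Iwahori--Matsumoto terms of length up to roughly $\ell(y)+\ell(z)$, which in general exceeds $\ell(x)$. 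Concretely, in affine $\SL_2$, writing $s_0$ for the affine simple reflection and taking $y=z=\alpha^{\vee}$ with $t_y=s_1s_0$, one finds $T_{t_y}T_{t_{-y}} = (q-1)T_{s_1s_0s_1}+q(q-1)T_{s_1}+q^2T_e$: a length-three term appears even though $\theta_0=T_e$. Such long terms must cancel against the products of $T_y$ with the lower-order pieces of $T_z^{-1}$, and exhibiting this cancellation \emph{is} the content of the lemma; ``isolating the top-length component via Bruhat/length combinatorics'' is exactly what is missing. Your fallback --- that well-definedness of $\theta_x$ ``pins down'' the leading coefficient --- is a non sequitur: independence of the decomposition $x=y-z$ says only that all decompositions produce the same element of $H_I$; it carries no information whatsoever about that element's expansion in the basis $\{T_{\tilde w}\}$, and in particular cannot force the terms of length greater than $\ell(x)$ to vanish. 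The same hidden cancellations recur in your next step: for non-dominant $x$, right multiplication by $T_s$ can lower length, so the expansion of $T_{t_x}T_w$ contains terms $T_{t_xw'}$ with $\ell(t_xw')>\ell(t_xw)$, and the ``controlled triangular structure in the length filtration'' of $\theta_xT_w$ again holds only after nontrivial cancellation (the honest statement is triangularity in the Bruhat order of $\tilde W$, which is what \cite{HKP} actually prove, using real input about that order).

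The paper sidesteps triangularity entirely, and you could too, using only ingredients you have already established. Spanning: since $T_w$ ($w\in W$) and $T_y$ ($y$ dominant) generate $H_I$, Proposition \ref{relation} lets one move every $\theta$ to the left of every $T_w$ (its right-hand side is a finite sum of $\theta$'s, because $x-s(x)$ is an integer multiple of $\alpha^{\vee}$), so the $\theta_xT_w$ span. Independence: given $\sum_{i,j}c_{i,j}\theta_{x_i}T_{w_j}=0$, multiply on the left by $\theta_{x_0}$ with $x_0$ so dominant that every $x_0+x_i$ is dominant; by the multiplicativity $\theta_{x_0}\theta_{x_i}=\theta_{x_0+x_i}$ (which you did prove) and by the identities $\theta_u = q^{-\ell(u)/2}T_u$ and $T_uT_w=T_{uw}$ for dominant $u$, the relation becomes a vanishing linear combination of the \emph{distinct} Iwahori--Matsumoto basis elements $T_{(x_0+x_i)w_j}$, forcing all $c_{i,j}=0$. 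This dominant-shift trick is the idea your proposal lacks; if you prefer to keep the triangularity route, you must actually prove the leading-term lemma, for which see \cite{HKP}.
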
 
\begin{proof} Since $T_w$, $w\in W$ and $T_x$, with $x$ dominant generate $H_I$, Proposition \ref{relation} implies 
that $\theta_x T_w$ span $H_I$. Thus it remains to prove the linear independence. We follow an argument from \cite{S1}. 
Assume that 
\[ 
\sum_{i,j} c_{i,j} \theta_{x_i} T_{w_j}=0.
\] 
Let $x_0\in X$ be dominant such that $x_0+x_i$ is dominant for all $x_i$ appearing in the sum. Then, after multiplying 
by $\theta_{x_0}$ from the left, 
\[ 
\sum_{i,j} c_{i,j} \theta_{x_0+ x_i} T_{w_j}=0.
\] 
However, if $x$ is dominant then $T_x \cdot T_w=T_{x\cdot w}$. In particular,  $\theta_{x_0+x_i} T_{w_j}$ are linearly independent.  
Thus $c_{i,j}=0$. 
\end{proof}

Let $A$ be the sub algebra of $H_I$ generated by $\theta_x$. Then $ A\cong\mathbb C[X]$ 
via the isomorphism $ \theta_x\mapsto  [x]$. (We shall write an element in the group algebra 
$\mathbb C[X]$ as $\sum_{x\in X} c_x [x]$, where $c_x\in\mathbb C$, in order to distinguish 
$[x-y]$ from $[x]-[y]$.)

\begin{prop} \label{maximal} 
The centralizer of $A$ in $H_I$ is $A$. 
\end{prop}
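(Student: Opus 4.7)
The plan is to use Proposition~\ref{basis} to write an arbitrary $h\in H_I$ uniquely as $h=\sum_{w\in W} a_w T_w$ with $a_w\in A$, and then to exploit a ``length-upper-triangular'' commutation formula for $T_w$ against $A$ to peel off the highest-length component. The inclusion $A\subset$ (centralizer of $A$ in $H_I$) is obvious, so only the reverse inclusion needs work.

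The key preliminary I would establish is the formula
\[
T_w\,\theta_x \;=\; \theta_{w(x)}\, T_w \;+\; \sum_{\ell(v)<\ell(w)} b_{v,w,x}\, T_v,
\qquad b_{v,w,x}\in A,
\]
valid for all $w\in W$ and $x\in X$. The proof is by induction on $\ell(w)$. The case $w=e$ is trivial. For the inductive step, write $w=sw'$ with $s$ a simple reflection and $\ell(w')=\ell(w)-1$, so $T_w=T_s T_{w'}$. Apply the inductive hypothesis to $T_{w'}\theta_x$ and then Proposition~\ref{relation} to commute $\theta_{w'(x)}$ past $T_s$. This produces the leading term $\theta_{s(w'(x))}T_s T_{w'}=\theta_{w(x)}T_w$, plus remainders of two types, all of strictly smaller length: (i) the purely $A$-valued remainder from Proposition~\ref{relation} multiplied by $T_{w'}$, landing in $A\cdot T_{w'}$; (ii) terms $T_s\cdot b'_{v'}T_{v'}$ coming from the inductive tail, reshaped via Proposition~\ref{relation} applied to each monomial in $b'_{v'}$ into $A\cdot T_s T_{v'}+A\cdot T_{v'}$, and then the quadratic relation $T_s^2=(q-1)T_s+q$ shows $T_sT_{v'}$ has length at most $\ell(v')+1\le\ell(w')$.

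With the formula in hand, suppose $h=\sum_w a_w T_w$ commutes with every $\theta_x$, and let $w_0$ be of maximal length in $\{w : a_w\neq 0\}$ (assumed nonempty). Since $A$ is commutative, $\theta_x$ slides past each $a_w$, and the formula yields
\[
0 \;=\; [\theta_x,h] \;=\; \sum_w a_w\bigl(\theta_x-\theta_{w(x)}\bigr)\, T_w \;+\; R,
\]
where $R$ lies in the $A$-span of $\{T_v : \ell(v)<\ell(w_0)\}$. Comparing the Bernstein-basis coefficient of $T_{w_0}$ forces $a_{w_0}\bigl(\theta_x-\theta_{w_0(x)}\bigr)=0$ in $A$ for every $x\in X$. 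If $w_0\neq e$, faithfulness of the $W$-action on $X$ (each simple reflection sends $\alpha^\vee$ to $-\alpha^\vee$) provides an $x$ with $w_0(x)\neq x$, making $\theta_x-\theta_{w_0(x)}\neq 0$ in the domain $A\cong\mathbb C[X]$, and therefore $a_{w_0}=0$, a contradiction. Hence the only surviving term is $w=e$ and $h=a_e\in A$.

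The main obstacle is establishing the upper-triangular formula, specifically keeping careful track of lengths through the repeated use of Proposition~\ref{relation} and the quadratic relation. Once that is in place, the rest is a formal consequence of $\mathbb C[X]$ being an integral domain on which $W$ acts faithfully.
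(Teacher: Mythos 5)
Your proposal is correct and takes essentially the same approach as the paper: expand the element in the Bernstein basis, use Proposition \ref{relation} to get the length-upper-triangular commutation rule $T_w\theta_x=\theta_{w(x)}T_w+(\text{lower-length terms})$, and compare the coefficient of a maximal-length $T_{w_0}$ to force it to vanish when $w_0\neq e$. The only cosmetic difference is the final step, where you invoke that $A\cong\mathbb C[X]$ is an integral domain to kill $a_{w_0}$, while the paper rules out cancellation by choosing $y$ with $y-w_0(y)$ arbitrarily large; both are valid.
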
 
 \begin{proof} 
 Let $z\in H_I$. Express $z$ in the Bernstein's basis, and let $\theta_x T_w$ be 
 a term in the expression such that $\ell(w)$ is maximal. If $w=1$, then $z\in A$. Otherwise, 
 there exists $y\in X$ such that $w(y) \neq y$. Now notice that  $\theta_y \cdot \theta_x T_w =\theta_{y+x}$, while 
 \[ 
 \theta_x T_w \cdot \theta_y= \theta_{x+w(y)} T_w + \sum_{z,v} c_{z,v} \theta_{z}T_{v} 
 \] 
 where $\ell(v)<\ell( w)$. As $y-w(y)$ can be made arbitrarily large,  $z$ does not commute with all elements in $A$. 
 \end{proof}



\section{Satake Map} 

We fix the measure on $N$ so that  the volume of $(N\cap K)$ is $[K:I]$.
We identify  $C_c(T/T_K)$ with $\mathbb C[X]$ by $f\mapsto \sum_{x\in X} f(x)[x]$. 
The Satake map $S: C_c(G/K) \rightarrow C_c(T/T_K)=\mathbb C[X]$ is defined by 
\[ 
S (f)(t)=\delta(t)^{1/2} \int_N f(tn) ~dn. 
\]  
It is a formal check (see \cite{Ca})  that $S$, when restricted to $H_K=C_c(K\backslash G/K)$, is a homomorphism and 
the image of $H_K$ is contained in $\mathbb C[X]^W$.

\begin{prop} Let $1_K$ be the identity element of $H_K$. Then 
 $\theta_x\ast 1_K$, $x\in X$, form a basis of 
$C_c(I\backslash G/K)$.
\end{prop}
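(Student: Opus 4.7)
The plan is to establish spanning first, using Bernstein's basis together with a direct computation of $T_w \ast 1_K$ for $w$ in the finite Weyl group, and then to deduce linear independence by a translation-by-a-dominant-element trick.

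For spanning, I would first show that the convolution map $\Phi : H_I \to C_c(I\backslash G/K)$, $f \mapsto f \ast 1_K$, is surjective: any $h \in C_c(I\backslash G/K)$ is automatically right $I$-invariant (since $I \subset K$), hence lies in $H_I$, and satisfies $h \ast 1_K = h$ because $1_K$ acts as an identity on right $K$-invariant functions. Next, a direct calculation using that $T_w$ is supported on $IwI \subset K$ for $w \in W$ yields $T_w \ast 1_K = q^{\ell(w)} 1_K$. Combining this with Bernstein's basis $\{\theta_x T_w : x \in X, \, w \in W\}$ from Proposition \ref{basis} gives $\Phi(\theta_x T_w) = q^{\ell(w)} \theta_x \ast 1_K$, so $\{\theta_x \ast 1_K : x \in X\}$ already spans $C_c(I\backslash G/K)$.

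For linear independence, the key observation will be that for $y$ dominant the Bernstein formula reduces to $\theta_y = q^{-\ell(y)/2} T_y$, a nonzero scalar multiple of the characteristic function of $IyI$. Hence $\theta_y \ast 1_K$ is supported in the single double coset $IyK$, and a short check of its value at $y$ shows it is a nonzero multiple of $\chi_{IyK}$. Since the cosets $\{IyK\}_{y \in X}$ are distinct (they parameterize $\tilde W / W \cong X$), the family $\{\theta_y \ast 1_K : y \text{ dominant}\}$ is linearly independent. For a general relation $\sum_x c_x \theta_x \ast 1_K = 0$, I would then pick $z' \in X$ dominant enough that every $z' + x$ with $c_x \neq 0$ is again dominant, and apply $\theta_{z'}$ on the left. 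Using that $A \cong \mathbb{C}[X]$ is commutative with $\theta_{z'}\theta_x = \theta_{z'+x}$, this yields $\sum_x c_x \theta_{z'+x} \ast 1_K = 0$, and the dominant case forces $c_x = 0$.

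The main obstacle will be that for non-dominant $x$ the Bernstein formula $\theta_x = q^{(\ell(z)-\ell(y))/2} T_y T_z^{-1}$ involves the inverse $T_z^{-1}$, whose support is spread over many double cosets, so one cannot read off the support of $\theta_x \ast 1_K$ directly. The translation trick sidesteps this entirely by reducing to the dominant case, where the support is transparent.
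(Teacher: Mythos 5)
Your proof is correct, and its spanning half is essentially the paper's argument: the paper likewise writes $C_c(I\backslash G/K)=C_c(I\backslash G/I)\ast 1_K$ and invokes Bernstein's basis, leaving implicit the computation $T_w\ast 1_K=q^{\ell(w)}1_K$ that you carry out (for spanning one only needs that $T_w\ast 1_K$ is supported in $K$ and left $I$-, right $K$-invariant, hence a scalar multiple of $1_K$). Where you genuinely diverge is linear independence. The paper gets it in one stroke from Proposition \ref{basis}: since $1_K=\frac{1}{[K:I]}\sum_{w\in W}T_w$, one has
\[
\theta_x\ast 1_K=\frac{1}{[K:I]}\sum_{w\in W}\theta_x T_w,
\]
so distinct $x$ expand into disjoint sets of Bernstein basis vectors and independence is immediate, with no further input. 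You instead re-run, one level up, the translation trick already used to prove Proposition \ref{basis}: for dominant $y$ you identify $\theta_y\ast 1_K=q^{-\ell(y)/2}T_y\ast 1_K$ as a nonzero multiple of the characteristic function of $It_yK$, use that these double cosets are pairwise disjoint, and reduce the general relation to the dominant case by convolving with a suitable $\theta_{z'}$. This is valid, but it costs an extra geometric input the paper never needs, namely the parameterization $I\backslash G/K\cong \tilde{W}/W\cong X$ (disjointness of the cosets $It_yK$), and it duplicates work already encapsulated in Bernstein's basis. What it buys in exchange is concreteness: your argument exhibits $\theta_y\ast 1_K$ for dominant $y$ as an explicit function supported on a single coset, which makes the structure of $C_c(I\backslash G/K)$ over the Cartan-type decomposition visible and parallels the triangularity arguments in classical proofs of the Satake isomorphism, whereas the paper's argument is shorter and purely algebraic.
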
 
\begin{proof} Note that 
$C_c(I\backslash G/K) =C_c(I\backslash G/I)\ast 1_K$. Since $1_K=\frac{1}{[K:I]}\sum_{w\in W} T_w$, the proposition 
 follows  from Proposition \ref{basis}. 
\end{proof} 

\begin{lemma}  \label{key} 
Let $(\pi,V)$ be a smooth $G$-module and $(\pi',V')$ a smooth $B$-module with the trivial action of $N$.  Let 
$S: V \rightarrow V'$ be a map such that $S(\pi(b)v)=\delta^{-1/2}(b) \pi'(b) S(v)$ for every $b\in B$. Then, 
 for every $x\in $X and $v\in V^I$,
\[
S(\pi(\theta_x) v) =\pi'(t_x) v.
\]
\end{lemma}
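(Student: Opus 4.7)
The plan is to treat dominant $x$ first by an explicit coset computation, and then bootstrap to arbitrary $x\in X$ via multiplicativity of the $\theta_x$. (The right-hand side should read $\pi'(t_x)S(v)$, since $\pi'$ acts on $V'$, where $S$ lands.)

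First I handle dominant $x$. Since $\theta_x=q^{-\ell(x)/2}T_x$ and $T_x$ is the characteristic function of $It_xI$, the main geometric step is to decompose $It_xI$ into left $I$-cosets. Using the Iwahori factorization $I=(I\cap N^-)T_K(I\cap N)$ together with the dominance of $x$, which forces $t_x(I\cap N)t_x^{-1}\subseteq I\cap N$ and $t_x(I\cap N^-)t_x^{-1}\supseteq I\cap N^-$, one checks that
\[ I\cap t_xIt_x^{-1}=(I\cap N^-)\cdot T_K\cdot t_x(I\cap N)t_x^{-1}, \]
so coset representatives for $I/(I\cap t_xIt_x^{-1})$ can be chosen inside $I\cap N$, and a root-by-root count gives $[I\cap N:t_x(I\cap N)t_x^{-1}]=\prod_{\alpha>0}q^{\alpha(x)}=q^{\rho(x)}=q^{\ell(x)}$. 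Thus $It_xI=\bigsqcup_{i=1}^{q^{\ell(x)}}n_it_xI$ with $n_i\in I\cap N\subseteq I$. Since $\mathrm{vol}(I)=1$ and $v\in V^I$, the $n_i$ act trivially on $v$, so $\pi(T_x)v=q^{\ell(x)}\pi(t_x)v$ and hence $\pi(\theta_x)v=q^{\ell(x)/2}\pi(t_x)v$. Applying $S$ and the intertwining hypothesis produces
\[ S(\pi(\theta_x)v)=q^{\ell(x)/2}\delta^{-1/2}(t_x)\pi'(t_x)S(v), \]
and $\delta(t_x)=q^{\rho(x)}=q^{\ell(x)}$ in the normalization of the paper, so the powers of $q$ cancel exactly and the dominant case is done.

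For general $x\in X$, write $x=y-z$ with $y,z$ dominant; then $\theta_x=\theta_y\theta_z^{-1}$ directly from the definition of Bernstein's elements. Since $\theta_z$ is invertible in $H_I$, the operator $\pi(\theta_z)$ is invertible on $V^I$ with inverse $\pi(\theta_z^{-1})$. Setting $v':=\pi(\theta_z^{-1})v\in V^I$ and applying the dominant case to $\theta_z$ at argument $v'$ gives $S(v)=\pi'(t_z)S(v')$, equivalently $S(\pi(\theta_z^{-1})v)=\pi'(t_{-z})S(v)$; applying the dominant case to $\theta_y$ at $v'$ then yields
\[ S(\pi(\theta_x)v)=S(\pi(\theta_y)v')=\pi'(t_y)S(v')=\pi'(t_y)\pi'(t_{-z})S(v)=\pi'(t_x)S(v). \]

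The hard part is the dominant case, or more precisely the coset decomposition of $It_xI$ with representatives in $N$. Once that geometric input is secured, the cancellation of $q$-powers and the extension to general $x$ are both formal.
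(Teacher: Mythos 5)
Your overall route --- the dominant case via the Iwahori factorization and a coset decomposition of $It_xI$ with representatives in $I\cap N$, followed by the reduction of general $x$ to dominant $y,z$ through $\theta_x=\theta_y\theta_z^{-1}$ --- is exactly the standard argument that the paper points to when it says the lemma ``appears in the literature'' for $V'=V_N$ and omits the proof; your correction of the right-hand side to $\pi'(t_x)S(v)$ is also right. However, there is one genuinely false step in your dominant case: the claim that $\pi(T_x)v=q^{\ell(x)}\pi(t_x)v$ holds in $V$ ``since the $n_i$ act trivially on $v$''. In $\pi(n_it_x)v=\pi(n_i)\bigl(\pi(t_x)v\bigr)$ the element $n_i$ acts on $\pi(t_x)v$, which is not $I$-fixed, and you cannot move $n_i$ past $t_x$: that would require $t_x^{-1}n_it_x\in I$, whereas conjugation by $t_x^{-1}$ expands $I\cap N$ when $x$ is dominant. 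The identity is already false for $V=C_c(G/K)$ and $v=1_K$: there $T_x\ast 1_K$ is supported on $It_xK=\bigsqcup_i n_it_xK$, a union of $q^{\ell(x)}$ distinct $K$-cosets, so it is not a multiple of the characteristic function of $t_xK$.

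The repair is to apply $S$ \emph{before} discarding the $n_i$: from $\pi(T_x)v=\sum_i\pi(n_i)\pi(t_x)v$ and the intertwining hypothesis applied to $b=n_i\in N$ --- namely $S(\pi(n_i)w)=\delta^{-1/2}(n_i)\pi'(n_i)S(w)=S(w)$, because $\delta$ is trivial on $N$ and $N$ acts trivially on $V'$ --- one gets $S(\pi(T_x)v)=q^{\ell(x)}S(\pi(t_x)v)$, and then the hypothesis at $b=t_x$ finishes the dominant case. This matters beyond bookkeeping: as written, your proof never invokes the trivial action of $N$ on $V'$, which is the one hypothesis that makes the lemma true at all. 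Once this step is fixed, the rest of your argument stands: the $q$-power cancellation works with the normalization $\delta(t_x)=q^{\ell(x)}$ for dominant $x$ (which is indeed the normalization forced by the stated $\delta^{-1/2}$-intertwining), and the bootstrap to general $x$, using invertibility of $\theta_z$ in $H_I$ and $\theta_x=\theta_y\theta_z^{-1}$, is correct, up to the harmless check that $T_K$ acts trivially on $S(v)$ for $v\in V^I$ (take $b\in T_K\subseteq I$ in the hypothesis), so that the choices of representatives $t_y$, $t_{-z}$, $t_x$ do not matter.
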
  
This lemma appears in the literature in a special case when $V'=V_N$, the normalized 
Jacquet functor. The proof is the same and therefore omitted.

\begin{thm}\label{A}  The map $S$ induces an isomorphism 
 of left $ A\cong \mathbb C[X]$-modules 
 \[ 
 C_c(I\backslash G/K)\cong \mathbb C[X]
 \]  
 which sends the basis elements $\theta_x \ast 1_K$ to the basis elements $[x]$. 
\end{thm}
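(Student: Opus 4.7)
The plan is to obtain Theorem~\ref{A} as a direct corollary of Lemma~\ref{key}, with the specific choices $V = C_c(G/K)$ under the left-regular action $\pi$ of $G$, and $V' = C_c(T/T_K) \cong \mathbb{C}[X]$ under the left-regular action $\pi'$ of $T$, extended to $B$ by letting $N$ act trivially. Because $T$ acts on $\mathbb{C}[X]$ by translation, one has $\pi'(t_x)[0] = [x]$, so the lemma applied to a suitable $v \in V^I$ should yield the desired formula $S(\theta_x \ast 1_K) = [x]$ essentially without further work.

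The first task is to verify that $S \colon V \to V'$ satisfies the intertwining rule $S(\pi(b)v) = \delta^{-1/2}(b)\pi'(b)S(v)$ for $b \in B$. For $b = t_0 \in T$ this is a one-line computation: pulling $\delta^{1/2}(t_0)^{-1}$ out of the defining integral and changing variables $t \mapsto t_0^{-1}t$ produces exactly the claimed factor. For $b = n_0 \in N$, the right translation $n \mapsto n_0^{-1} n$ shows $S(\pi(n_0)f) = S(f)$, consistent with $\pi'|_N$ being trivial and $\delta|_N \equiv 1$. Having checked this, I would take $v = 1_K \in V^I$ and invoke Lemma~\ref{key} to obtain $S(\theta_x \ast 1_K) = \pi'(t_x) S(1_K)$.

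The remaining ingredient is the identity $S(1_K) = [0]$. Here the Iwasawa-type decomposition $K \cap B = T_K \cdot (N \cap K)$ implies that $1_K(tn)$ is nonzero only when $t \in T_K$ and $n \in N \cap K$; combined with $\delta|_{T_K} \equiv 1$, the normalization $\mathrm{vol}(N \cap K) = [K:I]$, and the fact that $1_K|_K = 1/[K:I]$, this yields the characteristic function of $T_K \subset T/T_K$, which in $\mathbb{C}[X]$ is exactly $[0]$. Putting everything together gives $S(\theta_x \ast 1_K) = \pi'(t_x)[0] = [x]$. Since the $\theta_x \ast 1_K$ form a basis of $C_c(I \backslash G/K)$ by the preceding basis proposition and the $[x]$ form a basis of $\mathbb{C}[X]$, the map is a linear isomorphism; $A$-equivariance is then automatic, using $\theta_y \ast \theta_x = \theta_{x+y}$ on one side and $[y]\cdot[x] = [x+y]$ on the other. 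I expect the only genuine risk of error to lie in the measure-normalization step: the values of $1_K$, the Haar measure on $N$, and the modular character must all line up so that $S(1_K)$ is exactly $[0]$ rather than a nonzero scalar multiple. No structural obstacle appears beyond these bookkeeping checks.
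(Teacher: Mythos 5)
Your proposal is correct and takes essentially the same route as the paper: apply Lemma~\ref{key} with $V = C_c(G/K)$, $V' = C_c(T/T_K)$, and the Satake map, evaluate at $v = 1_K \in V^I$, and match the basis $\theta_x \ast 1_K$ to the basis $[x]$. The only difference is that you spell out the intertwining hypothesis and the normalization computation $S(1_K) = [0]$, which the paper leaves implicit; both checks are carried out correctly.
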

\begin{proof} We apply Lemma \ref{key} to $V=C_c(G/K)$, $V'=C_c(T/T_K)$ (considered as left $G$ and 
$T$-modules) and $S$ the Satake map. Then, for every $f\in C_c(I\backslash G/K)$, 
$S(\theta_x \ast f)(t)= S(f)(t_x^{-1} t)$. Thus $S(\theta_x \ast f)=[x]\cdot S(f)$. 
 In particular,  $S(\theta_x \ast 1_K)= [x]\cdot S(1_K)=[x]$, and the theorem follows. 
\end{proof}

Let $Z_I$ be the center of $H_I$.  Let $A^W$ be the span of $\sum_{w\in W} \theta_{w(x)} $ for $x\in X$. 
Corollary \ref{central} implies that $A^W\subseteq Z_I$. 
Let $Z: Z_I \rightarrow H_K$ be a homomorphism defined by  $Z(z) = z\ast 1_K$. 

\begin{thm}  The maps $Z$ and $S$ induce isomorphisms of algebras 
\[ 
A^W \cong Z_I\cong H_K \cong \mathbb C[X]^W.
\] 
\end{thm}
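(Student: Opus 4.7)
The plan is to prove $Z_I = A^W$ directly from Bernstein's presentation, then transport this identification across $Z$ and $S$ using Theorem \ref{A} to reach $H_K$ and $\mathbb C[X]^W$. The inclusion $A^W \subseteq Z_I$ is essentially the remark just before the statement: by Corollary \ref{central} each orbit sum $\sum_{w \in W} \theta_{w(x)}$ commutes with every simple $T_s$ and hence with all $T_w$, so together with its evident commutation with $A$ it commutes with a generating set of $H_I$. For the reverse containment, let $z \in Z_I$; Proposition \ref{maximal} forces $z \in A$, so write $z = \sum_{x} c_x \theta_x$. For any simple reflection $s$ the right-hand side of Proposition \ref{relation} lies in $A$, because $1-\theta_{-\alpha^\vee}$ divides $\theta_x - \theta_{s(x)}$ in $A$. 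Summing against the $c_x$ gives $T_s z - s(z)\, T_s = r \in A$, where $s(z) := \sum_x c_x \theta_{s(x)}$. Centrality of $z$ rewrites this as $(z - s(z))\, T_s = r$; in Bernstein's basis the left side is a linear combination of $\theta_y T_s$ while $r$ is a linear combination of $\theta_y T_e$, so Proposition \ref{basis} forces both to vanish. Hence $s(z) = z$ for every simple $s$, and $z \in A^W$.

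Next I would deduce that $Z$ and $S$ are bijections. Theorem \ref{A} sends $\theta_x \ast 1_K$ to $[x]$, so the composition $S \circ Z : A^W \to \mathbb C[X]^W$ takes the orbit sum $\sum_{w \in W} \theta_{w(x)}$ to $\sum_{w \in W} [w(x)]$. As $x$ ranges over $W$-orbit representatives in $X$ these form bases of $A^W$ and $\mathbb C[X]^W$ respectively, so $S \circ Z$ is a vector-space isomorphism. Injectivity of $S$ on $C_c(I\backslash G/K) \supseteq H_K$ given by Theorem \ref{A} restricts to injectivity of $S|_{H_K}$, and since $S(H_K) \subseteq \mathbb C[X]^W$ already contains $S(Z(Z_I)) = \mathbb C[X]^W$, we obtain $S \colon H_K \cong \mathbb C[X]^W$. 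Then $Z \colon Z_I \to H_K$ is also a bijection, sitting between two maps whose composite is bijective.

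Finally, to promote these bijections to algebra isomorphisms, $S|_{H_K}$ is multiplicative by the classical Satake argument recalled in the text, and $Z$ is multiplicative because for $z_1, z_2 \in Z_I$ the centrality of $z_2$ against $1_K \in H_I$ together with $1_K \ast 1_K = 1_K$ gives $Z(z_1) \ast Z(z_2) = z_1 \ast 1_K \ast z_2 \ast 1_K = z_1 \ast z_2 \ast 1_K = Z(z_1 z_2)$. I expect the main obstacle to be the second half of the $Z_I = A^W$ step: converting "commutes with every $T_s$" into "fixed by every simple $s$" relies crucially on the exact commutator formula of Proposition \ref{relation} together with the uniqueness in Bernstein's basis, both of which must be invoked simultaneously.
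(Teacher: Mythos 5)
Your proposal is correct, and it reaches the theorem by a genuinely different route than the paper. The paper never computes the center directly: it sets up the chain of injections $A^W \hookrightarrow Z_I \hookrightarrow H_K \hookrightarrow \mathbb C[X]^W$ (Corollary \ref{central} for the first; injectivity of $S\circ Z$ from Proposition \ref{maximal} plus Theorem \ref{A} for the second; injectivity of $S|_{H_K}$ from Theorem \ref{A} for the third), and then observes that the total composite hits every basis element $\sum_{w\in W}[w(x)]$ of $\mathbb C[X]^W$; surjectivity of the composite together with injectivity of each link forces all the inclusions to be equalities simultaneously, so $Z_I=A^W$ falls out as a byproduct rather than serving as an input. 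You instead prove $Z_I=A^W$ first, by the Bernstein--Lusztig style center computation: $Z_I\subseteq A$ via Proposition \ref{maximal}, then Proposition \ref{relation} (whose right-hand side does lie in $A$, by the divisibility of $\theta_x-\theta_{s(x)}$ by $1-\theta_{-\alpha^{\vee}}$ that you invoke) combined with the uniqueness in Proposition \ref{basis} turns centrality into the identity $(z-s(z))T_s=r$, whose two sides are supported on disjoint parts of the Bernstein basis, forcing $s(z)=z$; only afterwards do you transport the result through $Z$ and $S$. Your route costs this extra computation, but it buys something the paper's does not: the determination of the center is purely internal to $H_I$, using no Satake map and no harmonic analysis on $G$, and it would apply essentially verbatim to an abstract affine Hecke algebra (compare the paper's final remarks on Lusztig's formal-$q$ version, where the center after specializing $q$ is exactly the point left open). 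The paper's sandwich argument is shorter and delivers the Satake isomorphism and the center computation in one stroke, which is its stated purpose. A further merit of your write-up is that you verify explicitly two facts the paper asserts without proof: that $Z$ is multiplicative, via $z_1\ast 1_K\ast z_2\ast 1_K=z_1\ast z_2\ast 1_K$ using centrality of $z_2$ and $1_K\ast 1_K=1_K$, and that the orbit sums genuinely form bases of $A^W$ and of $\mathbb C[X]^W$.
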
 
\begin{proof}  Theorem \ref{A} implies that $S$, restricted to $H_K$, is injective. 
Proposition \ref{maximal} implies that $Z_I\subseteq  A$. This and Theorem \ref{A}  imply  that the map $S\circ Z$ 
is injective. Thus, we have the injections 
\[ 
A^W \subseteq Z_I\subseteq H_K \subseteq \mathbb C[X]^W.
\] 
 Since $(S\circ Z)(\sum_{w\in W} \theta_{w(x)})=S(\sum_{w\in W} \theta_{w(x)}\ast 1_K)=\sum_{w\in W} [w(x)]$, 
 the above injections are isomorphisms. 

\end{proof} 

\vskip 10pt 
\noindent
{\bf Final Remarks.}  A proof of the isomorphism $Z_I\cong \mathbb C[X]^W$ can 
be found  in \cite {Da} and \cite{HKP}. 
 Both approaches are based on the explicit description of the Bernstein component of the category 
of smooth $G$-modules containing the trivial representation. Dat also shows 
that the map $Z$ gives an isomorphism of $Z_I$ and $H_K$.  On the other hand,  
Lusztig \cite{Lu} considers a version 
of the algebra $H_I$ over the ring $\mathbb Z[q^{\pm1/2}]$ where $q$ is considered a formal 
variable. He shows that the center is isomorphic 
to $\mathbb Z[q^{\pm 1/2}][X]^W$ by specializing $q^{1/2}=1$.  No claim is made as to what the center is 
when $q$ is specialized to a power of a prime number.

\end{document}